\documentclass[a4j,10pt,onecolumn,oneside,notitlepage,openany,final]{report}
\usepackage{amsmath}
\usepackage{amsthm}
\usepackage{amsfonts}
\usepackage{amssymb}
\usepackage{bm}
\usepackage{graphicx}
\usepackage{longtable}

\newtheorem{thm}[]{Theorem} 

\newtheorem{lem}[thm]{Lemma}

\newtheorem{prop}[thm]{Proposition}
\newtheorem{rem}[thm]{Remark}

\begin{document}
\title{\textbf{On a classification of ideals of local rings for irreducible curve singularities}}
\author{Masahiro WATARI}
\date{}
\maketitle
\begin{abstract}
We consider a classification problem of ideals by codimension in case rings are the local rings of irreducible curve singularities.  
In this paper, we introduce a systematic method to solve this problem. 
\end{abstract}

\section{Introduction}
Let $k$ be an algebraically closed field of characteristic zero. 
In the present paper, we consider a local integral noetherian $k$-algebra $\mathcal{O}$ of dimension one. 
We also assume that $\mathcal{O}$ is complete. 
Our aim  is to present an effective method for a classification of ideals of  $\mathcal{O}$ with fixed codimension.
Such classification is important for the study of punctual Hilbert schemes for irreducible curve singularities. 
See \cite{PS} about punctual Hilbert schemes for curve singularities. 
In \cite{S} and \cite{SW}, the results in this paper were used implicitly. 
However, they were not described at all. 
So we explain the methods precisely in this paper. 
 
In Section\,\ref{Preliminaries}, we fix the notations and prove some lemmas needed later. 
We define the order set of an ideal. 
We also show that the set of all ideals with fixed codimension admits finitely many order sets.  
In Section\,\ref{Determination of ideals}, we consider generators of an ideal whose order set is a given $\Gamma$-module. 
We first consider normal forms of these. 
In general, we need to determine the coefficients in the normal forms to attain the given order set. 
The algorithm to determine the coefficients is also introduced. 
In Section\,\ref{Algorithms}, we first introduce two algorithms. 
One is to compute the minimal generating set for a given $\Gamma$-module 
and the other is to compute all order sets for ideals with  codimension $i$ from those for ideals with  codimension $i-1$. 
These algorithms allow us to construct a systematic way to determine the all order sets for ideals with a given codimension.  
Finally, we prove that Theorem\,\ref{Computational steps} which gives a computational method to determine  all ideals with a given codimention. 
In Section\,\ref{Examples}, we consider two examples, the cases for the singularities of $A_6$ and $E_6$ types.

\section{Preliminaries}\label{Preliminaries}
We first fix notations. 
For a local ring $\mathcal{O}$ as in the previous section, its maximal ideal is expressed by $\mathfrak{m}_{\mathcal{O}}$. 
We denote by $\overline{\mathcal{O}}$ the normalization  of $\mathcal{O}$.   
In our case, we have $\overline{\mathcal{O}}=k[[t]]$. 
We call the set $\Gamma:=\{\mathrm{ord}(f)\,|\,f\in \mathcal{O}\}$ the \emph{semigroup} of $\mathcal{O}$. 
The set $G:=\mathbb{N}\setminus \Gamma$ is called the \emph{gap sequence} of $\Gamma$. 
The \emph{conductor of $\Gamma$} is to be $c:=\max\{G\}+1$. 
A subset $S$ of $\Gamma$ is called a \emph{$\Gamma$-module}, 
if the following two conditions hold: 
(i) $s_1+s_2\in S$ for $\forall s_1, s_2\in S$, (ii) $s+\gamma\in S$ for $\forall s\in S$, $\forall \gamma\in\Gamma$. 
If a $\Gamma$-module $S$ is generated by a set $A=\{\alpha_1,\ldots,\alpha_m\}$ 
(i.e. $S=\{\gamma+\sum_{i=1}^{m} \alpha_i|\,\gamma\in\Gamma\}$), then we express it by $S=\langle A\rangle=\langle \alpha_1,\ldots,\alpha_m\rangle$.  
We refer $A$ as a \emph{generating set} of $S$. 
A generating set $A$ of $S$ is called \emph{minimal}, if $A\setminus \{\alpha\}$ for $\forall \alpha\in A$ does not generate $S$.   
Let $I$ be a nonzero ideal in $\mathcal{O}$. 
We call $\tau:=\tau(I):=\dim_{k}\mathcal{O}/I$ the \emph{codimension} of $I$ and denote by $\mathcal{I}_r$ the set of all ideals with $\tau(I)=r$.   
The set $\Gamma(I):=\{\mathrm{ord}(f)\,|\,f\in I\}$ is called the \emph{order set} of $I$. 
Note that $\Gamma(I)$ is a $\Gamma$-module. 
Put $G(I):=\Gamma\setminus \Gamma(I)$.  
We define the \emph{conductor of $\Gamma (I)$} by $c(I):=\max\{G(I)\}+1$.  
The following lemma shows that the codimension of $I$ depends on $G(I)$.
Namely, it is determined by  $\Gamma(I)$. 
\begin{lem}\label{colength}
An ideal $I$ of $\mathcal{O}$ has $\tau(I)=r$ if and only if we have $\sharp  G(I) =r$. 
\end{lem}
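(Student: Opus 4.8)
The plan is to read the dimension of $\mathcal{O}/I$ off a comparison of the $\mathrm{ord}$-filtrations of $\mathcal{O}$ and of $I$. For $n\geq 0$ put $\mathcal{O}_{n}:=\{f\in\mathcal{O}\,|\,\mathrm{ord}(f)\geq n\}\cup\{0\}$ and $I_{n}:=I\cap\mathcal{O}_{n}$; these are decreasing chains of ideals of $\mathcal{O}$. The first step is the elementary remark that the ``leading coefficient'' map sending $f$ to the coefficient of $t^{n}$ in $f$ identifies $\mathcal{O}_{n}/\mathcal{O}_{n+1}$ with $k$ when $n\in\Gamma$ and with $0$ otherwise, and likewise identifies $I_{n}/I_{n+1}$ with $k$ when $n\in\Gamma(I)$ and with $0$ otherwise. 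So the ``sizes'' of the graded pieces of the two filtrations are exactly encoded by $\Gamma$ and $\Gamma(I)$.

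Next I would pass to the quotient. Set $\overline{\mathcal{O}}_{n}:=(\mathcal{O}_{n}+I)/I\subseteq\mathcal{O}/I$, a decreasing filtration of $\mathcal{O}/I$. The natural surjection $\mathcal{O}_{n}\to\overline{\mathcal{O}}_{n}/\overline{\mathcal{O}}_{n+1}$ has kernel $\mathcal{O}_{n+1}+I_{n}$, whence $\overline{\mathcal{O}}_{n}/\overline{\mathcal{O}}_{n+1}\cong\mathcal{O}_{n}/(\mathcal{O}_{n+1}+I_{n})$, and the short exact sequence $0\to I_{n}/I_{n+1}\to\mathcal{O}_{n}/\mathcal{O}_{n+1}\to\mathcal{O}_{n}/(\mathcal{O}_{n+1}+I_{n})\to 0$, combined with $\Gamma(I)\subseteq\Gamma$, shows that $\dim_{k}\overline{\mathcal{O}}_{n}/\overline{\mathcal{O}}_{n+1}$ equals $1$ precisely when $n\in\Gamma\setminus\Gamma(I)=G(I)$ and equals $0$ otherwise. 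Summing the dimensions of the successive quotients of $\{\overline{\mathcal{O}}_{n}\}$ then yields $\dim_{k}\mathcal{O}/I=\sharp G(I)$, and since $\tau(I)=\dim_{k}\mathcal{O}/I$ by definition, both implications of the lemma come out at once (including the fact that finiteness of either invariant forces the other).

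The one step that is not purely formal is legitimizing the identity $\dim_{k}\mathcal{O}/I=\sum_{n\geq 0}\dim_{k}\overline{\mathcal{O}}_{n}/\overline{\mathcal{O}}_{n+1}$, i.e. that the filtration $\{\overline{\mathcal{O}}_{n}\}$ reaches $0$ after finitely many steps, equivalently $\mathcal{O}_{N}\subseteq I$ for $N\gg 0$. This I would settle using the conductor: fix $0\neq g\in I$ with $\mathrm{ord}(g)=d$ and let $c$ be the conductor of $\Gamma$; then any $f\in\mathcal{O}$ with $\mathrm{ord}(f)\geq d+c$ has $f/g\in t^{c}k[[t]]$, and $t^{c}k[[t]]$ is the conductor ideal of $\mathcal{O}$ in $\overline{\mathcal{O}}=k[[t]]$, hence contained in $\mathcal{O}$; therefore $f=g\cdot(f/g)\in I$, so $\mathcal{O}_{d+c}\subseteq I$ and the filtration stabilizes. (In particular this re-proves $\tau(I)<\infty$ for every nonzero $I$, so the lemma is really the identity $\tau(I)=\sharp G(I)$.) A variant that avoids the filtration bookkeeping altogether is to choose, for each $\gamma\in\Gamma$, an element $e_{\gamma}\in\mathcal{O}$ with $\mathrm{ord}(e_{\gamma})=\gamma$, and to show that $\{\,\overline{e_{\gamma}}\,|\,\gamma\in G(I)\,\}$ is a $k$-basis of $\mathcal{O}/I$: spanning follows by repeatedly subtracting from a representative a scalar multiple of some $e_{\gamma}$ (when its current order lies in $G(I)$) or of some element of $I$ (when its order lies in $\Gamma(I)$) so as to raise the order, the process converging $\mathfrak{m}_{\mathcal{O}}$-adically because $\mathcal{O}$ is complete and $G(I)$ is finite; linear independence holds because a nontrivial combination $\sum_{\gamma\in G(I)}c_{\gamma}e_{\gamma}$ has order equal to its least index with $c_{\gamma}\neq 0$, which lies in $G(I)$ and hence not in $\Gamma(I)$, so the combination cannot lie in $I$.
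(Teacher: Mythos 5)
Your proof is correct, and it takes a more structured route than the paper does. The paper's own proof is a two-line assertion: $I\in\mathcal{I}_r$ if and only if $\mathcal{O}/I$ is spanned by residues of $1,t^{d_1},\ldots,t^{d_{r-1}}$, from which $\sharp G(I)=r$ is read off — essentially the content of your closing ``variant'', stated without the spanning, independence, and termination details that you supply. Your main argument instead compares the order filtrations $\mathcal{O}_n$ and $I_n=I\cap\mathcal{O}_n$: the identification of the kernel of $\mathcal{O}_n\to\overline{\mathcal{O}}_n/\overline{\mathcal{O}}_{n+1}$ as $\mathcal{O}_{n+1}+I_n$ and the exact sequence $0\to I_n/I_{n+1}\to\mathcal{O}_n/\mathcal{O}_{n+1}\to\mathcal{O}_n/(\mathcal{O}_{n+1}+I_n)\to 0$ both check out, and summing the graded pieces gives the sharper statement $\dim_k\mathcal{O}/I=\sharp G(I)$, from which both implications of the lemma are immediate. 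What your version buys is that it proves, rather than assumes, the finiteness of $\tau(I)$ for nonzero $I$, and it isolates exactly where completeness enters, namely through the conductor inclusion $t^{c}k[[t]]\subseteq\mathcal{O}$; note that this inclusion is the one fact you use without proof, but it is exactly the background the paper itself invokes without comment (e.g.\ ``there exists $t^{j}$ in $\mathcal{O}$ for all $j\ge c$'' in the proof of Lemma~\ref{normal form}), so you are at the same level of assumed knowledge. One small refinement to the variant: after appealing to $\mathfrak{m}_{\mathcal{O}}$-adic convergence of the reduction process you should either remark that the accumulated elements of $I$ converge to an element of $I$ (ideals are closed in a complete noetherian local ring), or, more simply, stop the reduction as soon as the order exceeds $\mathrm{ord}(g)+c$ and invoke $\mathcal{O}_{\mathrm{ord}(g)+c}\subseteq I$ as in your filtration argument, which makes the process terminate after finitely many steps.
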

\begin{proof}
An ideal $I$ belongs to $\mathcal{I}_{r}$ if and only if 
\begin{equation}\label{quotient}
\mathcal{O}/I=\left\{a_0+a_1t^{d_1}+\cdots+a_{r-1}t^{d_{r-1}}|\,a_i\in k,d_1<\cdots<d_{r-1}\right\}
\end{equation}
holds.    
The last condition implies $\sharp G(I)=r$. 
\end{proof}

Let $S$ be a $\Gamma$-module.
We denote by $\mathcal{I}(S)$ the set of all ideals of $\mathcal{O}$ with $\Gamma(I)=S$. 
We have the following  proposition. 

\begin{prop}\label{decomposition of Ir}
There exist a finite number of distinct $\Gamma$-modules $S_1,\cdots,S_l$ such that the set $\mathcal{I}_r$ is decomposed as
\begin{equation}\label{decomposition}
\mathcal{I}_r=\bigcup_{i=1}^l \mathcal{I}(S_i)
\end{equation}
where $\mathcal{I}(S_i)\cap \mathcal{I}(S_j)$ for $i\neq j$. 
\end{prop}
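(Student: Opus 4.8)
The plan is to identify the decomposition \eqref{decomposition} with the partition of $\mathcal{I}_r$ into the fibres of the map $I\mapsto\Gamma(I)$, and then to show that only finitely many fibres are nonempty. Since $\Gamma(I)$ is a $\Gamma$-module for every $I\in\mathcal{I}_r$, this map sends each such $I$ to a $\Gamma$-module $S$ with $\mathcal{I}(S)\ne\emptyset$; conversely, by Lemma~\ref{colength}, if $\Gamma(I)=S$ then $I\in\mathcal{I}_r$ precisely when $\sharp(\Gamma\setminus S)=r$, so every $\Gamma$-module occurring this way has an $r$-element complement in $\Gamma$ and satisfies $\mathcal{I}(S)\subseteq\mathcal{I}_r$. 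I would therefore let $S_1,\dots,S_l$ be exactly those $\Gamma$-modules $S$ with $\sharp(\Gamma\setminus S)=r$ and $\mathcal{I}(S)\ne\emptyset$. Disjointness is then immediate: an ideal has a single order set, so distinct $S_i$ give disjoint $\mathcal{I}(S_i)$. The inclusion $\bigcup_{i}\mathcal{I}(S_i)\subseteq\mathcal{I}_r$ holds because each $\mathcal{I}(S_i)\subseteq\mathcal{I}_r$, and the reverse inclusion holds because $\Gamma(I)$ is one of the $S_i$ for any $I\in\mathcal{I}_r$.

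The only substantive point is that the list $S_1,\dots,S_l$ is finite, and for this it suffices to bound it by the (a priori larger) set of all $\Gamma$-modules $S\subseteq\Gamma$ with $\sharp(\Gamma\setminus S)=r$. Here the key step is a uniform bound on the least element of such an $S$. If $r=0$ then necessarily $S=\Gamma$, so assume $r\ge 1$; then $0\notin S$, and writing $s_0:=\min S$, every element of $\Gamma$ strictly below $s_0$ lies in $\Gamma\setminus S$. Since $\Gamma$ contains $0$ together with all integers $\ge c$, this forces $s_0\le c+r-1$. On the other hand $S\supseteq s_0+\Gamma$ because $S$ is a $\Gamma$-module, so $S$ contains every integer $\ge s_0+c$, and hence $\Gamma\setminus S\subseteq\{0,1,\dots,2c+r-2\}$. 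Since there are only finitely many subsets of this fixed finite set and $S$ is recovered from $\Gamma\setminus S$, there are only finitely many such $S$, which completes the argument.

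I expect the main obstacle to be precisely this uniform bound on $\min S$: it is what converts ``finitely many gaps'' into ``finitely many gap configurations'', and it is where the role of the conductor $c$ of $\Gamma$ enters. The rest --- the fibre partition, the disjointness of the $\mathcal{I}(S_i)$, and the translation between codimension and the size of $\Gamma\setminus S$ via Lemma~\ref{colength} --- is formal and should require no delicate estimates.
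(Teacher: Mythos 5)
Your proposal is correct and takes essentially the same route as the paper: both reduce the statement to the finiteness of the possible gap sets $\Gamma\setminus\Gamma(I)$ for $I\in\mathcal{I}_r$, using Lemma~\ref{colength} to fix their cardinality at $r$. The only difference is one of completeness: where the paper simply asserts that there are finitely many choices of $\{0,d_1,\ldots,d_{r-1}\}$, you justify this with the explicit conductor bound $\min S\le c+r-1$ and $\Gamma\setminus S\subseteq\{0,\ldots,2c+r-2\}$, and you also spell out the disjointness of the fibres of $I\mapsto\Gamma(I)$, which the paper leaves implicit.
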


\begin{proof}
Fix a codimension $r$. 
Let $I$ be an element of $\mathcal{I}_r$. 
By Lemma\,\ref{colength}, we have $\sharp G(I)=r$. 
Write $G(I)=\{0,d_1,\ldots,d_{r-1}\}$. 
Then it must satisfy the condition (\ref{quotient}).
It is clear that the number of such $d_1,\ldots, d_{r-1}$ is finite.
Hence we have only finite number of $\Gamma$-module of the form $\Gamma \setminus \{0,d_1,\ldots,d_{r-1}\}$.  
\end{proof}

For the decomposition\,(\ref{decomposition}) of $\mathcal{I}_r$, set $\mathfrak{S}_r:=\{S_1,\ldots,S_l\}$. 
Letting $A_i$ be the minimal generating set of $S_i$, 
define $\mathfrak{A}_r:=\{A_1\ldots,A_l\}$. 
Let $I$ be a non-zero ideal  of $\mathcal{O}$ such that $I\neq (1)$.
Set $G_1(I):=G(I)\setminus \{0\}$.
We define $b_1:=\min \{G_1(I)\}$. 
For $b_1$, put $B_1(I):=G_1(I)\cap (b_1+\Gamma)$. 
If $G_1(I)\neq B_1(I)$, then put $b_2:=\min \{G_1(I)\setminus B_1(I)\}$ and  $B_2(I):=\{G_1(I)\setminus (b_1+\Gamma)\}\cap (b_2+\Gamma)$.
We continue this process successively. 
Namely, if $G_1(I)\neq \cup_{i=1}^{j-1}B_i(I)$, 
then we set $b_j:=\min \{G_1(I)\setminus \cup_{i=1}^{j-1}B_i(I)\}$ and  $B_j(I):=\{G_1(I)\setminus \cup_{i=1}^{j-1}B_i(I)\}\cap (b_j+\Gamma)$.
Since $\sharp G(I)<\infty$, there exist a positive integer $n$ such that $G_1(I)=\cup_{i=1}^{n}B_i(I)$. 
For each $i$, we define $d_i:=\max\{B_i(I)\}$.

\section{Determination of ideals}\label{Determination of ideals}
Form this section, we freely use the notations introduced in the previous section. 
Let $\mathcal{O}$ be the local ring of an irreducible curve singularity with a semigroup $\Gamma$. 
For an element $f$ of $\mathcal{O}$, we denote by {\small LC}$(f)$ the leading coefficient of $f$. 
For a given $\Gamma$-module $S$, we determine generators of the ideals in  $I(S)$. 
Let $I$ be an element of $I(S)$. 
For each  $\gamma\in \Gamma (I)=S$, we consider the following element of $I$: 
$$
f_\gamma:=t^{\gamma}+\sum_{j\in G(I),j>\gamma}a_{\gamma,j}t^j.
$$
\begin{lem}\label{normal form}
Let $S$ be a $\Gamma$-module. 
For an ideal $I$ in $\mathcal{I}(S)$, 
we can take 
\begin{equation}\label{generators}
F:=\{f_\gamma \in I|\,\gamma\in S,\ \mathrm{min}\{S\}\le \gamma\le \mathrm{min}\{S\}+c-1\}
\end{equation}
as the set of generators of $I$.
\end{lem}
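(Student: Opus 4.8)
The plan is to show two things: first, that the elements $f_\gamma$ actually exist in $I$ for every $\gamma\in S$ (so that the definition makes sense), and second, that the finite subfamily $F$ indexed by $\min\{S\}\le\gamma\le\min\{S\}+c-1$ already generates $I$ as an $\mathcal{O}$-module. For the first point, I would argue as follows: since $\gamma\in S=\Gamma(I)$, there is some $g\in I$ with $\mathrm{ord}(g)=\gamma$; after scaling we may assume $\mathrm{LC}(g)=1$, so $g=t^\gamma+\sum_{j>\gamma}b_j t^j$. Whenever a coefficient $b_j$ with $j\in S$ appears, $j$ is itself the order of some element of $I$, and subtracting a suitable $\mathcal{O}$-multiple of an element of order $j$ kills that term while only affecting strictly higher-order terms. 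Because $\mathcal{O}$ is complete, iterating this clears every term whose exponent lies in $S$, leaving exactly an element of the form $f_\gamma$ with all remaining exponents in $G(I)$; the process converges $t$-adically. This is the step I expect to be the main technical obstacle — one must be careful that the clearing procedure terminates or converges and that it never reintroduces an exponent already removed, which is where completeness of $\mathcal{O}$ and the fact that $\Gamma\setminus\Gamma(I)$ is finite are both used.

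Next I would prove that $F$ generates $I$. Let $J$ be the ideal generated by $F$; clearly $J\subseteq I$. To see $I\subseteq J$, take any $h\in I$, $h\ne 0$, and let $\gamma=\mathrm{ord}(h)\in S$. If $\gamma\le\min\{S\}+c-1$, then $f_\gamma\in F$ and $h-\mathrm{LC}(h)\,f_\gamma$ lies in $I$ with strictly larger order. If instead $\gamma\ge\min\{S\}+c$, write $\gamma=\gamma'+\delta$ with $\gamma'\in S$, $\min\{S\}\le\gamma'\le\min\{S\}+c-1$ and $\delta\in\Gamma$: this is possible because $\gamma-\min\{S\}\ge c$ is in the semigroup range beyond the conductor, so it decomposes as an element of $\Gamma$ plus something landing in the prescribed window — more precisely one chooses $\gamma'$ to be the largest element of $S$ in that window that is $\le\gamma$, and checks $\gamma-\gamma'\in\Gamma$ using $c=\max(G)+1$. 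Then $h-\mathrm{LC}(h)\,t^{\delta}f_{\gamma'}\in I$ has strictly larger order. In either case we have subtracted an element of $J$ and increased the order.

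Finally I would close the argument by the completeness of $\mathcal{O}$: iterating the reduction produces a sequence of elements of $I$ whose orders strictly increase, hence whose difference from $h$ lies in arbitrarily high powers of $\mathfrak{m}_{\mathcal{O}}$; the partial sums of the subtracted multiples of generators in $F$ form a Cauchy sequence, and its limit — which lies in $J$ since $J$ is closed in the $\mathfrak{m}_{\mathcal{O}}$-adic topology, being finitely generated over the complete local ring $\mathcal{O}$ — equals $h$. Therefore $h\in J$, so $I=J$. The two places demanding genuine care are the decomposition $\gamma=\gamma'+\delta$ with $\gamma'$ in the window (a small semigroup computation with the conductor) and the convergence/closedness argument at the end; the rest is a standard order-reduction induction.
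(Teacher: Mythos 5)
Your overall strategy is essentially the paper's own, written out in more detail: the paper starts from the fact that $I$ is generated by all $f_\gamma$ with $\gamma\in S$, and then discards every $f_\gamma$ with $\gamma\ge\min\{S\}+c$ by reducing it against the minimal-order generator $f_{\alpha_1}$, $\alpha_1=\min\{S\}$, using that $t^j\in\mathcal{O}$ for all $j\ge c$ and completeness to write $f_\gamma=\sum_{j\ge c}b_jt^jf_{\alpha_1}$. Your order-reduction of an arbitrary $h\in I$ against $F$, followed by the Cauchy-sequence and closedness argument, is the same mechanism. However, one step would fail as you state it: for $\gamma\ge\min\{S\}+c$ you propose to take $\gamma'$ to be the \emph{largest} element of $S$ in the window with $\gamma'\le\gamma$, and then $\delta=\gamma-\gamma'$ need not lie in $\Gamma$. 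Concretely, for the $A_6$ ring $k[[t^2,t^7]]$ one has $\Gamma=\{0,2,4,6,7,8,\ldots\}$, $c=6$; with $S=\langle 2\rangle$ the window is $[2,7]$, and for $\gamma=9$ the largest admissible $\gamma'$ is $6$, giving $\delta=3\notin\Gamma$. Moreover, even when $\delta\in\Gamma$, the literal monomial $t^\delta$ need not belong to $\mathcal{O}$ (only an element \emph{of order} $\delta$ does, since $\mathcal{O}$ is not assumed to be a monomial ring); $t^\delta\in\mathcal{O}$ is guaranteed only for $\delta\ge c$. Both defects disappear if you simply take $\gamma'=\min\{S\}$, for then $\delta=\gamma-\min\{S\}\ge c$, so $\delta\in\Gamma$ and $t^\delta\in\mathcal{O}$; this is exactly the reduction the paper performs. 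With that one-line correction your generation argument is complete, and your closing appeal to completeness and to closedness of the finitely generated ideal $J$ is sound.

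A smaller remark on your preliminary existence argument for the $f_\gamma$ (which the paper does not prove but takes for granted when defining them): clearing the coefficients at exponents lying in $S$ leaves an element whose remaining tail exponents lie in $\mathbb{N}\setminus S$, and this set coincides with $G(I)=\Gamma\setminus S$ only when every exponent occurring in elements of $\mathcal{O}$ lies in $\Gamma$ (true for monomial rings such as $k[[t^2,t^7]]$ and $k[[t^3,t^4]]$ treated in the paper, false for instance for $k[[t^4,t^6+t^7]]$, where every element of order $6$ has a nonzero $t^7$-coefficient although $7\notin\Gamma$). So either weaken the normal form to ``tail exponents outside $S$'' or record the extra hypothesis; this does not affect the generation argument itself.
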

\begin{proof}
It is clear that $I$ is generated by all $f_\gamma$ with $\gamma\in \Gamma(I)$. 
Note that we can take $f_\gamma=t^{\gamma}$ for $\gamma\ge c(I)$. 
Set $\alpha_1:=\min\{\Gamma(I)\}$. 
It is clear that $c(I)\le \alpha_1 +c$ holds. 
Since there exists $t^{j}$ in $\mathcal{O}$ for $\forall j\ge c$, for  $\gamma\in \Gamma$ with $\gamma\ge \alpha_1+c$,  we can reduce $f_{\gamma}$ by $f_{\alpha _1}$ as
$$
f_\gamma-\sum_{j\ge c}b_jt^jf_{\alpha_1}=0
$$
where $b_j$'s are suitable coefficients. 
Hence we can remove $f_\gamma$ with $\gamma\ge \alpha+c$ from the set of generators of $I$. 
\end{proof}

Let $I$ be an element of $\mathcal{I}(S)$. 
In general,  if  (\ref{generators}) is the set of generators of  $I$, 
then the coefficients of  its elements may satisfy some conditions to attain $\Gamma(I)=S$. 
We denote by $H$ the set of all such conditions. 
We must find $H$ to determine the ideals in $\mathcal{I}(S)$ from the data $S$ 
(see also Remark\,$\ref{remark}$ in Section\,\ref{Examples}). 
To determine $H$ for a given $S$, we introduce the reduction for two elements in $\mathcal{O}$. 
This is an analogue of S-polynomial for given two polynomials (cf. \cite{Cox}).  
For $h_1,h_2\in \mathcal{O}$, consider $V:=\{(\gamma_1,\gamma_2)\in \Gamma\times \Gamma|\, \gamma_1\cdot\mathrm{ord}(h_1)=\gamma_2\cdot\mathrm{ord}(h_2)\}$. 
Let $(\alpha,\beta)$ be the element of $V$ that makes the condition $\gamma_1\cdot\mathrm{ord}(h_1)=\gamma_2\cdot\mathrm{ord}(h_2)$ minimal. 
In the similar manner of the definition of $f_\gamma$, we consider the following elements of $\mathcal{O}$:
$$
g_\gamma:=t^{\gamma}+\sum_{j\in G,j>\gamma}b_{\gamma,j}t^j\quad (\gamma\in \Gamma)
$$
We define the \emph{reduction} of $h_1$ and $h_2$ by
\begin{equation}
\mathrm{Red}(h_1,h_2):=\frac{g_{\alpha}}{\mathrm{{\small LC}}(h_1)}h_1-\frac{g_{\beta}}{\mathrm{{\small LC}}(h_2)}h_2.
\end{equation}

\begin{prop}\label{alg3}
Let $I$ be an ideal in $\mathcal{I}(S)$. 
We rewrite the set $(\ref{generators})$ as $F=\{f_1\ldots,f_l\}$ where 
$$
f_i:=t^{\gamma_i}+\sum_{j\in G(I),j>\gamma}a_{i,j}t^j \quad (i=1,\ldots,l).
$$
The condition set $H$ of $F$  is obtained in a finite number of steps by the following algorithm$:$

\noindent
\rm{\texttt{Input:}} $F=\{f_1,\ldots,f_l\}$\\
\rm{\texttt{Output:}} $H$\\
\rm{\texttt{DEFINE:}} $H:=\emptyset$\\
\rm{\texttt{FOR}} each $i,j$ in $\{1\,\ldots,l\}$ with $i\neq j$ \rm{\texttt{DO}}\\
\quad $R:=$Red$(f_i,f_j)$\\
\qquad\rm{\texttt{WHILE}} $\deg (R)<c(I)$ \rm{\texttt{DO}}\\
\qquad\quad\rm{\texttt{IF}} ord$(R)\notin \Gamma(I)$ \rm{\texttt{THEN}} 
$H:=H\cup\{\mathrm{{\small LC}}(R)=0\}$\\
\qquad\quad\rm{\texttt{ELSE}} 
$R:=\mathrm{Red}\displaystyle{\left(R,\sum_{f_i\in L}b_ig_{\sigma_i}f_i\right)}$ for $L=\{f_i\in F|\,\exists \sigma_i\in\Gamma\textit{ s.t. }\gamma_i+\sigma_i=\mathrm{ord}(R)\}$
\end{prop}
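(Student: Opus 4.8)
The plan is to show that the displayed algorithm terminates after finitely many steps and that, on termination, the accumulated set $H$ is exactly the set of conditions on the coefficients $a_{i,j}$ that are equivalent to $\Gamma(I)=S$. The argument splits naturally into two parts: a termination argument and a correctness argument.

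For termination, I would first record that $\mathrm{Red}(f_i,f_j)$ either is zero or has order strictly larger than the common value $\alpha\cdot\mathrm{ord}(f_i)=\beta\cdot\mathrm{ord}(f_j)$, since the leading terms $t^{\alpha\,\mathrm{ord}(f_i)}$ and $t^{\beta\,\mathrm{ord}(f_j)}$ cancel by construction of $g_\alpha$ and $g_\beta$. More precisely, each invocation of $\mathrm{Red}$ in the inner reductions replaces $R$ by an element whose order is strictly larger than $\mathrm{ord}(R)$ (the subtracted term $\sum_{f_i\in L}b_i g_{\sigma_i}f_i$ has order $\mathrm{ord}(R)$ with matching leading coefficient after the normalization built into $g_{\sigma_i}$). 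Hence within a single pass of the \texttt{FOR} loop the quantity $\mathrm{ord}(R)$ strictly increases with every iteration of the \texttt{WHILE} loop; since the loop guard forces $\deg(R)<c(I)$ and orders lie in $\Gamma$, which is discrete, the \texttt{WHILE} loop can run only finitely many times. As the \texttt{FOR} loop ranges over the finite index set $\{1,\ldots,l\}^2$, the whole algorithm halts.

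For correctness, the key observation is that $\Gamma(I)\supseteq S$ is automatic: the generators $f_i$ have orders $\gamma_i$ generating $S$ as a $\Gamma$-module, and any $\mathcal O$-combination of them has order in $S$ unless cancellation occurs. So the content of the condition $\Gamma(I)=S$ is the \emph{reverse} inclusion $\Gamma(I)\subseteq S$, i.e. no $\mathcal O$-combination of the $f_i$ may produce an element whose order lies in $G(I)=\Gamma\setminus S$. I would argue, in the spirit of Buchberger's criterion (cf. \cite{Cox}), that such a ``bad'' cancellation, if it happens at all, already shows up among the reductions $\mathrm{Red}(f_i,f_j)$ and their iterated reductions against the $f_i$: any syzygy producing an element of order $<c(I)$ in the forbidden set can be rewritten as a combination of these pairwise reductions, because the module of syzygies of the leading terms $t^{\gamma_1},\ldots,t^{\gamma_l}$ over $\Gamma$ is generated by the ``pair'' syzygies. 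Thus $\mathrm{ord}(R)\notin\Gamma(I)$ at some stage forces $\mathrm{LC}(R)=0$, and conversely imposing all these leading-coefficient vanishings guarantees every reduction closes up with order $\ge c(I)$, where $t^j\in I$ for all $j\ge c(I)$ makes further reduction trivial; hence $\Gamma(I)\subseteq S$. Therefore the conjunction of the equations collected in $H$ is necessary and sufficient for $\Gamma(I)=S$.

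The main obstacle I anticipate is the correctness direction, specifically making precise the claim that it suffices to examine the pairwise reductions $\mathrm{Red}(f_i,f_j)$ together with the re-reductions against $L$ — this is the analogue of the nontrivial half of Buchberger's criterion, and here it must be carried out in the semigroup ring $k[[t]]$ filtered by $\Gamma$ rather than in a polynomial ring with a monomial order. One has to verify that the relevant syzygy module is generated by pair syzygies in this filtered setting and that the re-reduction step (the \texttt{ELSE} branch, which subtracts $\sum_{f_i\in L}b_i g_{\sigma_i}f_i$) genuinely continues to lower the ``S-element'' into a normal form, so that the \texttt{WHILE} loop faithfully tests membership of the leading order in $S$. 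Handling the completeness of $\mathcal O$ and the passage to $t^j$ for $j\ge c$ is routine by Lemma \ref{normal form}; the syzygy-theoretic bookkeeping is where the real work lies.
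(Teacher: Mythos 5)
Your termination argument is exactly the paper's: each application of $\mathrm{Red}$ strictly raises $\mathrm{ord}(R)$ because the leading terms cancel by construction of the $g_\gamma$, and once $\mathrm{ord}(R)\ge c(I)$ every order lies in $\Gamma(I)=S$, so no further conditions can arise; combined with the finiteness of the pairs $(i,j)$, this, together with the observations that $\mathrm{LC}(R)$ is a polynomial in the coefficients of $f_i$ and $f_j$ and that $\mathrm{ord}(R)\notin S$ forces $\mathrm{LC}(R)=0$ (necessity of each recorded condition, for all values of the auxiliary $b_i$), is the entire content of the paper's proof of Proposition~\ref{alg3}. The point you single out as the main obstacle --- the Buchberger-type completeness claim that pairwise reductions, re-reduced against $L$, detect every $\mathcal{O}$-combination of the $f_i$ whose order falls in $G(I)$, so that the vanishings collected in $H$ are also \emph{sufficient} for $\Gamma(I)=S$ --- is not addressed in the paper at all: its proof simply walks through the loop and stops at termination. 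So your proposal covers everything the published argument does, by essentially the same route, and your flagged syzygy-theoretic step is a demand for more rigor than the paper supplies rather than a gap relative to it; if you wanted to close that step you would indeed need the analogue of pair-syzygy generation in the $\Gamma$-filtered complete setting, or a direct argument exploiting that only finitely many orders below $c(I)$ are in question, but nothing of the sort appears in the paper's proof.
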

\begin{proof}
For distinct elements $f_i$ and $f_j$ in $F$, we first compute $R_1:=\mathrm{Red}(f_i,f_j)$. 
Note that {\small LC}$(R_1)$ is a polynomial with respect to the coefficients of $f_i$ and $f_j$. 
If $\mathrm{ord}(R_1)\notin S$, then we must have {\small LC}$(R_1)=0$. 
We add it to $H$ and put $R_2:=R_1$. 
On the other hand, if $\mathrm{ord}(R_1)\in S$, 
then we set 
$L_{1}:=\{f_i \in F|\,\exists \sigma_i\in\Gamma\textit{ s.t. }\gamma_i+\sigma_i=\mathrm{ord}(R_1)\}$ 
and reduce $R_1$ by the power series $\sum_{f_i\in L_1}b_ig_{\sigma_i}f_i$ $(b_i\in k)$. 
Put $R_2:=\mathrm{Red}\left(R_1,\sum_{f_i\in L_1}b_ig_{\sigma_i}f_i\right)$. 
Next we apply same arguments to $R_2$. 
If $\mathrm{ord}(R_2)\notin S$, then the leading coefficient {\small LC}$(R_2)$ must be 0 for any value of $b_i$. 
This fact yields some conditions of  the coefficients of $f_i$ and $f_j$. 
We add them to $H$ and rewrite $R_2$ by $R_3$. 
If $\mathrm{ord}(R_2)\in S$, then we set $L_{2}:=\{f_i \in F|\,\exists \sigma_i\in\Gamma\textit{ s.t. }\gamma_i+\sigma_i=\mathrm{ord}(R_2)\}$ and reduce $R_2$ by  $\sum_{f_i\in L_2}b_ig_{\sigma_i}f_i$. 
Put $R_3:=\mathrm{Red}\left(R_2,\sum_{f_i\in L_2}b_ig_{\sigma_i}f_i\right)$. 
In this way, we have $\mathrm{ord}(R_1)< \mathrm{ord}(R_2)<\cdots$. 
If $\mathrm{ord}(R_i)\ge c(I)$, then further reduction yeilds no element of $H$, because any integer $a$ with $a\ge c(I)$ belongs to $\Gamma(I)=S$. 
So these procedures terminate in finite many steps.
Finally, we obtain the set $H$ of conditions. 
\end{proof}

\section{ Computational algorithms}\label{Algorithms}
In this section, we give a computational method to determine $\mathcal{I}_r$. 
\begin{prop}\label{alg1}
For a $\Gamma$-module $S$, we obtain the set $A$ of minimal generators for $S$ in a finite number of steps by the following algorithm$:$

\noindent
\rm{\texttt{Input:}}$S$ \\
\rm{\texttt{Output:}}$A$\\
\rm{\texttt{DEFINE:}}$A:=\emptyset$, $S:=S$\\
\rm{\texttt{WHILE}} $S\neq \emptyset$ \rm{\texttt{DO}}\\
\quad $A:=A\cup\{\min\{S\}\}$\\
\quad $S:=S\setminus \{\min\{S\}+\gamma\,|\,\gamma\in \Gamma\}$
\end{prop}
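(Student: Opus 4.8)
The plan is to verify three things in turn: that the \texttt{WHILE} loop runs only finitely many times, that the output $A$ generates $S$, and that this $A$ is minimal. It will be convenient to name the data produced by a run of the algorithm: let $m_j$ denote the element $\min\{S\}$ chosen on the $j$-th pass through the loop, and let $S^{(j)}$ denote the value of the working variable $S$ after that pass, so that $S^{(0)}=S$, $\ m_j=\min\{S^{(j-1)}\}$, and $S^{(j)}=S^{(j-1)}\setminus(m_j+\Gamma)$.

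For termination, I would first note that $0=\mathrm{ord}(1)\in\Gamma$, so $m_j\in m_j+\Gamma$ and each pass strictly shrinks the working set; then I would show that the working set is already finite after the first pass. The key observation is that a nonempty $\Gamma$-module $S$ contains every integer $\ge\min\{S\}+c$, since such an integer minus $\min\{S\}$ lies in $\Gamma$ by the definition of the conductor $c$ of $\Gamma$. Hence all those integers are deleted on the first pass, and $S^{(1)}\subseteq\{x\in\mathbb{N}\mid m_1<x<m_1+c\}$ is finite; combined with the strict decrease in size at each step this bounds the number of passes by $c$. (In the degenerate case $S=\emptyset$ the loop is not entered and $A=\emptyset$, which is the correct answer.)

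For $\langle A\rangle=S$, the inclusion $\langle A\rangle\subseteq S$ is immediate: each $m_j$ is the minimum of $S^{(j-1)}\subseteq S$, hence lies in $S$, and $S$ being a $\Gamma$-module forces $m_j+\Gamma\subseteq S$. For the reverse inclusion I would use termination: since the loop ends with an empty working set, every $s\in S=S^{(0)}$ is deleted on some pass $j$, and deletion on pass $j$ means exactly $s\in m_j+\Gamma\subseteq\langle A\rangle$. For minimality, I would first record that the chosen minima are strictly increasing, $m_1<m_2<\cdots<m_k$, since anything not deleted on pass $j$ exceeds the current minimum $m_j$; then, for a fixed index $i$, I would show $m_i\notin\bigcup_{j\ne i}(m_j+\Gamma)=\langle A\setminus\{m_i\}\rangle$. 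For $j>i$ this holds because every element of $m_j+\Gamma$ is $\ge m_j>m_i$ (recall $\Gamma\subseteq\mathbb{N}$), and for $j<i$ it holds because $m_i$ survived pass $j$ whereas pass $j$ deletes precisely $m_j+\Gamma$. Since $m_i\in S$ but $m_i\notin\langle A\setminus\{m_i\}\rangle$, the set $A\setminus\{m_i\}$ fails to generate $S$, and as $i$ was arbitrary $A$ is minimal.

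I expect the only real subtlety to be the termination argument, specifically the point that a single pass already makes the working set finite — this is exactly where the conductor of $\Gamma$ must be invoked. The generation step is routine once termination is in hand, and minimality reduces to a short bookkeeping argument after the monotonicity of the $m_j$ and the meaning of ``deleted on pass $j$'' have been pinned down.
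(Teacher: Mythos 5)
Your proof is correct and follows essentially the same route as the paper: termination is obtained from the fact that all integers at least $\min\{S\}+c$ lie in $\min\{S\}+\Gamma$, so the working set is finite after the first pass. The only difference is that you spell out the generation and minimality checks which the paper simply declares to be clear, and these details are right.
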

\begin{proof}
For $\alpha_1:=\min\{S\}$, consider the set $\alpha_1+\Gamma$. 
If $S\neq \alpha_1+\Gamma$, then we put $\alpha_2:=\min\{S\setminus (\alpha_1+\Gamma)\}$. 
We repeat this process as follows: 
If $S\neq \cup_{i=1}^{j-1}(\alpha_i+\Gamma)$, 
then set $\alpha_j:=\min\big\{S\setminus \cup_{i=1}^{j-1}(\alpha_i+\Gamma)\big\}$. 
Then we obtain the descending sequence
$$
S\supsetneq S\setminus (\alpha_1+\Gamma)\supsetneq\cdots \supsetneq S\setminus \cup_{i=1}^j(\alpha_i+\Gamma)\supsetneq\cdots.
$$
Since the infinite set $\{a\in\mathbb{N}|\,a\ge \alpha_1+c\}$ is contained in both $S$ 
and $\alpha_1+\Gamma$, we have $\sharp \{S\setminus (\alpha_1+\Gamma)\}<\infty$. 
Thus there exists a positive integer $n$ such that $S=\cup_{i=1}^n(\alpha_i+\Gamma)$.
This fact guarantee the termination of the algorithm. 
It is clear that the set $A$ obtained by this algorithm  is the minimal generating set of $S$.
\end{proof}

\begin{lem}\label{prp r r+1}
If there exists an ideal $I$ in $\mathcal{I}_{r-1}$ with $\Gamma(I)=\langle \alpha_1,\ldots,\alpha_m\rangle$, 
then so does an ideal $I'$ in $\mathcal{I}_{r}$ with $G(I')=G(I)\cup\{\alpha_i\}$ for each $i$. 
Conversely, if there exists  an ideal $J$ in $\mathcal{I}_{r}$ with $G(J)=\cup_{i=1}^mB_i(J)$, 
then  so does an ideal $J'$ in $\mathcal{I}_{r-1}$ with $G(J')=G(J)\setminus \{d_i\}$ for each $i$. 
\end{lem}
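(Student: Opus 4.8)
The plan is to prove both implications by the same three moves: verify that the prescribed new gap set is $\Gamma\setminus S_{0}$ for an honest $\Gamma$-module $S_{0}$ with $\sharp(\Gamma\setminus S_{0})<\infty$; realize $S_{0}$ as an order set; and convert $\sharp G$ into codimension by Lemma~\ref{colength}. The realization step is the same in both directions and is routine: given such an $S_{0}$ with minimal generators $\beta_{1},\dots,\beta_{p}$ (as produced by Proposition~\ref{alg1}), the ideal generated by $t^{\beta_{1}},\dots,t^{\beta_{p}}$ has order set exactly $S_{0}$, because every element of $\mathcal O$ is a power series supported on $\Gamma$ (as is already implicit in the normal forms of Lemma~\ref{normal form}), so every element of this ideal is supported on $\bigcup_{k}(\beta_{k}+\Gamma)=S_{0}$ and hence has its order in $S_{0}$, while the reverse inclusion is immediate. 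So everything reduces to the two ``is a $\Gamma$-module'' assertions.

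For the forward implication, fix a minimal generator $\alpha_{i}$ of $S:=\Gamma(I)=\langle\alpha_{1},\dots,\alpha_{m}\rangle$ and claim that $S\setminus\{\alpha_{i}\}$ is a $\Gamma$-module. The only ingredient is that, by the minimality coming out of Proposition~\ref{alg1}, $\alpha_{i}\notin\alpha_{j}+\Gamma$ for every $j\neq i$; since $S=\bigcup_{j}(\alpha_{j}+\Gamma)$ this shows $\alpha_{i}$ admits no expression $\alpha_{i}=s+\gamma$ with $s\in S$ and $\gamma\in\Gamma\setminus\{0\}$. Now if, for $s,s_{1},s_{2}\in S\setminus\{\alpha_{i}\}$ and $\gamma\in\Gamma$, either $s+\gamma$ or $s_{1}+s_{2}$ equalled $\alpha_{i}$, then — using that every nonzero element of $S$ lies in $\Gamma\setminus\{0\}$ — we would have produced exactly such a forbidden expression; hence $S\setminus\{\alpha_{i}\}$ is closed under both semigroup operations, and clearly $\sharp\bigl(\Gamma\setminus(S\setminus\{\alpha_{i}\})\bigr)=\sharp G(I)+1=r$. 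Realizing it and applying Lemma~\ref{colength} yields the desired $I'\in\mathcal I_{r}$ with $G(I')=G(I)\cup\{\alpha_{i}\}$.

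For the converse, fix $i$, set $x:=d_{i}=\max B_{i}(J)$, and claim that $\Gamma(J)\cup\{x\}$ is a $\Gamma$-module. Closure under both operations reduces at once to the single statement that $x+\gamma\in\Gamma(J)$ for every $\gamma\in\Gamma\setminus\{0\}$, and proving this is, I expect, the real obstacle of the lemma, the point where the maximality of $d_{i}$ within its branch must be used in an essential way. The line of attack: if $x+\gamma$ were a gap of $J$ it would sit in a unique branch $B_{l}(J)$; from $x\in b_{i}+\Gamma$ we get $x+\gamma\in b_{i}+\Gamma$, so $x+\gamma>x=\max B_{i}(J)$ excludes $l=i$, and the disjointness of the branches together with $x+\gamma\in b_{i}+\Gamma$ excludes $l>i$. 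The residual case $l<i$ is the delicate one: there $x-b_{l}\notin\Gamma$ while $(x-b_{l})+\gamma\in\Gamma$, and one must feed this into the $\Gamma$-module structure of $\Gamma(J)$ — equivalently, into the closure of $G(J)$ under subtraction of elements of $\Gamma$ — tracking the smaller gaps of $J$ that are thereby forced to appear back against the choice of $b_{l}$ as a minimum, until a contradiction is reached (or until one sees that $i$ must be restricted — it always can be to the branch containing $\max G(J)$, for which $\Gamma(J)\cup\{d_{i}\}$ is a $\Gamma$-module since every element of $\Gamma$ exceeding $\max G(J)$ already lies in $\Gamma(J)$). Once $\Gamma(J)\cup\{d_{i}\}$ is known to be a $\Gamma$-module, the realization step produces an ideal $J'$ with $\Gamma(J')=\Gamma(J)\cup\{d_{i}\}$, whence $\sharp G(J')=\sharp G(J)-1=r-1$ and Lemma~\ref{colength} finishes.
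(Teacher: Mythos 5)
Your forward direction and the realization step are fine; in fact realizing a $\Gamma$-module $S_0$ by the monomial ideal $(t^{\beta_1},\dots,t^{\beta_p})$ is cleaner than the paper's route, which takes the ideal generated by the normal forms $f_\gamma$ and asserts $\Gamma(I')=S_0$. The genuine gap is exactly where you suspected it: the claim that $\Gamma(J)\cup\{d_i\}$ is a $\Gamma$-module for \emph{every} branch index $i$, i.e.\ that $d_i+\gamma\in\Gamma(J)$ for all $\gamma\in\Gamma\setminus\{0\}$. You correctly reduce this to excluding the case where $d_i+\gamma$ is a gap lying in an earlier branch $B_l(J)$, $l<i$, and then stop, hoping either for a contradiction or for a restriction on $i$. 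No contradiction can be reached, because the claim --- and with it the ``for each $i$'' in the converse half of the lemma --- is false. Take $\Gamma=\langle 3,4\rangle$ (the $E_6$ example of Section~\ref{Examples}) and $J=(t^6,t^8)$, so $\Gamma(J)=\langle 6,8\rangle$, $G(J)=\{0,3,4,7\}$, $J\in\mathcal{I}_4$ (this is the order set $S_{4,2}$ of the paper's table). The branch decomposition is $b_1=3$, $B_1(J)=\{3,7\}$, $d_1=7$, and $b_2=4$, $B_2(J)=\{4\}$, $d_2=4$. For $i=2$ one has $d_2+3=7\in B_1(J)\subset G(J)$, so $\Gamma(J)\cup\{4\}=\{4,6,8,9,\dots\}$ is not a $\Gamma$-module, and no ideal $J'$ can have $G(J')=G(J)\setminus\{4\}=\{0,3,7\}$; consistently, $\{0,3,7\}$ does not occur among the codimension-$3$ gap sets in the paper's own $E_6$ table.

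For comparison, the paper's proof of this step is the single assertion ``by definition of $d_i$, we see that $d_i+\gamma\in\Gamma(J)$ for any $\gamma\in\Gamma\setminus\{0\}$'', which the example above refutes; so the converse of the lemma as printed is overstated, and your parenthetical escape route is the correct repair rather than a cop-out. If you take for $i$ the branch containing $\max G(J)$ (equivalently $d_i=\max G(J)$), then $d_i+\gamma>\max G(J)$ lies in $\Gamma$ and hence in $\Gamma(J)$, so $\Gamma(J)\cup\{d_i\}$ is a $\Gamma$-module, and your realization step together with Lemma~\ref{colength} produces $J'\in\mathcal{I}_{r-1}$. This weakened converse (``for some $i$'', e.g.\ the branch of the maximal gap) is all that is needed for the completeness of the algorithm in Proposition~\ref{alg2} and hence for Theorem~\ref{Computational steps}, so the downstream results survive. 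But as a proof of the lemma as stated, your attempt is incomplete at precisely the step that cannot be completed, and you should either restrict the statement to the maximal-gap branch or record a counterexample such as the one above.
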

\begin{proof}
Let $I$ be an ideal in $\mathcal{I}_{r-1}$ with $\Gamma(I)=\langle \alpha_1,\ldots,\alpha_m\rangle$. 
For an element $\alpha_i\in \Gamma(I)$, put $S=\{\Gamma(I)\setminus \{\alpha_i\}\}\cup \{\alpha_i+\gamma|\,\gamma\in \Gamma\setminus \{0\}\}$. 
It is easy to check that $S$ is a $\Gamma$-module. 
Consider an ideal $I'$ generated by $f_\gamma$ with 
$\mathrm{min}\{S\}\le \gamma \le \mathrm{min}\{S\}+c-1$. 
We see that $\Gamma(I')=S$ and $G(I')=G(I)\setminus \{\alpha_i\}$. 
Since $I\in \mathcal{I}_{r-1}$, we have $\sharp G(I)=r-1$ by Lemma\,\ref{colength}. 
So $\sharp G(I')=r$. 
We also have $I'\in\mathcal{I}_r$ by Lemma\,\ref{colength}. 
Conversely,  let $J$ be an element of $\mathcal{I}_{r}$ with $G(J)=\cup_{i=1}^mB_i(J)$. 
Note that, for each $i$,   the set $\Gamma (J)\cup \{d_i\}$ is a $\Gamma$-module. 
Indeed, by definition of $d_i$, we see that $d_i+\gamma\in\Gamma(J)$ for any $\gamma\in\Gamma\setminus \{0\}$. 
So the set $\Gamma (J)\cup \{d_i\}$ satisfies the definition of $\Gamma$-module. 
In the same manner as above, we take an ideal $J'$ generated by $f_\gamma$ with 
$\mathrm{min}\{\Gamma (J)\cup \{d_i\}\}\le \gamma \le \mathrm{min}\{\Gamma (J)\cup \{d_i\}\}+c-1$.
It is an element of $\mathcal{I}_{r-1}(\Gamma (J)\cup \{d_i\})$. 
\end{proof}

It follows from Lemma\,\ref{prp r r+1} that the following proposition. 
\begin{prop}\label{alg2}
We obtain the set $\mathfrak{S}_r$ from $\mathfrak{S}_{r-1}$ in a finite number of steps by the following algorithm$:$

\noindent
\rm{\texttt{Input:}}
$\Gamma$ and $\mathfrak{S}_{r-1}=\{S_{1}=\langle A_1\rangle \cdots,S_{l}=\langle A_l\rangle\}$ where $A_{i}=\{\alpha_{i,1},\cdots,\alpha_{i,m(i)}\}$ \\
\rm{\texttt{Output:}} $\mathfrak{S}_r$\\
\rm{\texttt{DEFINE:}} $\mathfrak{S}_{r}:=\emptyset$\\
\rm{\texttt{FOR}} each $i$ in $\{1\,\ldots,l\}$ and each $j$ in $\{1\,\ldots,m(i)\}$ \rm{\texttt{DO}}\\
\quad $S:=[S_i\setminus \{\alpha_{i,j}\}]\cup \{\alpha_{i,j}+\gamma|\,\gamma\in\Gamma\setminus\{0\}\}$\\
\qquad \rm{\texttt{IF}} $S\notin \mathfrak{S}_{r}$ \rm{\texttt{THEN}} $\mathfrak{S}_{r}:=\mathfrak{S}_{r}\cup \{S\}$
\rm{\texttt{ELSE}} do nothing
\end{prop}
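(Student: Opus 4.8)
The plan is to verify that the loop terminates and that its output equals $\{\Gamma(I)\mid I\in\mathcal{I}_r\}$, which by the decomposition (\ref{decomposition}) is precisely $\mathfrak{S}_r$. Termination is immediate: $\mathfrak{S}_{r-1}$ is finite by Proposition \ref{decomposition of Ir} and each minimal generating set $A_i$ is finite by Proposition \ref{alg1}, so the double \texttt{FOR} loop performs finitely many iterations; the \texttt{IF} guard only suppresses repetitions and does not affect the set-theoretic output.

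For soundness I would fix a pair $(i,j)$ and show the set $S:=[S_i\setminus\{\alpha_{i,j}\}]\cup\{\alpha_{i,j}+\gamma\mid\gamma\in\Gamma\setminus\{0\}\}$ built in the loop body belongs to $\mathfrak{S}_r$. Since $\alpha_{i,j}$ is a minimal generator of $S_i$, it is not of the form $s+\gamma$ with $s\in S_i$ and $\gamma\in\Gamma\setminus\{0\}$, so $\{\alpha_{i,j}+\gamma\mid\gamma\in\Gamma\setminus\{0\}\}\subseteq S_i\setminus\{\alpha_{i,j}\}$ and hence $S=S_i\setminus\{\alpha_{i,j}\}$. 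Now $S_i=\Gamma(I)$ for some $I\in\mathcal{I}_{r-1}$ and $\alpha_{i,j}$ is one of its minimal generators, so the first half of Lemma \ref{prp r r+1} yields an ideal $I'\in\mathcal{I}_r$ with $G(I')=G(I)\cup\{\alpha_{i,j}\}$; taking complements in $\Gamma$ gives $\Gamma(I')=\Gamma(I)\setminus\{\alpha_{i,j}\}=S$. Thus $S$ is the order set of a codimension-$r$ ideal, so $S\in\mathfrak{S}_r$.

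For completeness I would take an arbitrary $T\in\mathfrak{S}_r$, write $T=\Gamma(J)$ with $J\in\mathcal{I}_r$, decompose $G(J)=\bigcup_{k=1}^m B_k(J)$ as in Section \ref{Preliminaries}, and fix $d:=d_1=\max B_1(J)$. By the second half of Lemma \ref{prp r r+1} there is $J'\in\mathcal{I}_{r-1}$ with $G(J')=G(J)\setminus\{d\}$, so $\Gamma(J')=\Gamma(J)\cup\{d\}$, which is a $\Gamma$-module, and by the proof of that lemma $d+\gamma\in\Gamma(J)$ for every $\gamma\in\Gamma\setminus\{0\}$. The key point is that $d$ is a \emph{minimal} generator of $\Gamma(J')$: if $d=s+\gamma$ with $s\in\Gamma(J')$ and $\gamma\in\Gamma\setminus\{0\}$, then $s\neq d$ forces $s\in\Gamma(J)$, whence $d=s+\gamma\in\Gamma(J)$, contradicting $d\in G(J)$. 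Hence $\Gamma(J')=S_i$ for some $i$ and $d=\alpha_{i,j}$ for some $j$, so when the loop reaches $(i,j)$ it forms $[S_i\setminus\{d\}]\cup\{d+\gamma\mid\gamma\in\Gamma\setminus\{0\}\}$; using $S_i\setminus\{d\}=\Gamma(J)$ and $\{d+\gamma\mid\gamma\in\Gamma\setminus\{0\}\}\subseteq\Gamma(J)$, this set equals $\Gamma(J)=T$. So $T$ is produced by the algorithm, and combined with soundness this gives that the output is exactly $\mathfrak{S}_r$.

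The step I expect to be the main obstacle is the completeness direction, and within it the verification that the reattached element $d$ is a minimal generator of $\Gamma(J')$: without this, $d$ might fail to appear in the stored minimal generating set $A_i$, and the $\Gamma$-module $T$ could be overlooked by the loop. The remaining work is routine bookkeeping with complements in $\Gamma$ together with the two directions of Lemma \ref{prp r r+1}.
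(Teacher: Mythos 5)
Your argument is correct and rests on the same ingredients as the paper's proof: Lemma \ref{prp r r+1} together with the finiteness statements of Propositions \ref{decomposition of Ir} and \ref{alg1}. The difference is in how much is actually verified. The paper's proof only records soundness (each set $[S_i\setminus\{\alpha_{i,j}\}]\cup\{\alpha_{i,j}+\gamma\mid\gamma\in\Gamma\setminus\{0\}\}$ is the order set of some ideal in $\mathcal{I}_r$, by the first half of the lemma) and termination, and then asserts that the algorithm ``yields $\mathfrak{S}_r$''; the completeness direction is not spelled out. You supply exactly that missing direction: given $T=\Gamma(J)$ with $J\in\mathcal{I}_r$, you pass to $J'\in\mathcal{I}_{r-1}$ with $\Gamma(J')=\Gamma(J)\cup\{d_1\}$ and, crucially, check that $d_1$ is a \emph{minimal} generator of $\Gamma(J')$, so that $d_1$ really occurs among the stored generators $\alpha_{i,j}$ and the loop reproduces $T$; this is what makes the algorithm exhaustive, and it appears nowhere in the paper. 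Two minor comments: in the soundness step, the inclusion $\{\alpha_{i,j}+\gamma\mid\gamma\in\Gamma\setminus\{0\}\}\subseteq S_i\setminus\{\alpha_{i,j}\}$ already follows from $S_i$ being a $\Gamma$-module and $\gamma>0$, so minimality is not needed at that point; and your decision to reattach only $d_1$ (rather than an arbitrary $d_i$) is the prudent one, since the assertion in the proof of Lemma \ref{prp r r+1} that $d_i+\gamma\in\Gamma(J)$ for all $\gamma\in\Gamma\setminus\{0\}$ is only safe for $i=1$ (for larger $i$ the element $d_i+\gamma$ may fall into an earlier block $B_j(J)$), so restricting to $d_1$ keeps your completeness argument independent of that delicate point.
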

\begin{proof}
By the argument in the proof of Lemma\,\ref{prp r r+1}, the set 
$[S_i\setminus \{\alpha_{i,j}\}]\cup \{\alpha_{i,j}+\gamma|\,\gamma\in\Gamma\setminus\{0\}\}$ 
is a order set for some ideal in $\mathcal{I}_r$.  
Since we have $\sharp \mathfrak{S}_r<\infty$ by Proposition\,\ref{decomposition of Ir} and 
a $\Gamma$-module is generated by finely many generators by Proposition\,\ref{alg1}, 
this algorithm terminates and yields $\mathfrak{S}_r$.
\end{proof}
Using Proposition\,\ref{alg3}, \ref{alg1} and \ref{alg2}, 
we obtain the following theorem: 
\begin{thm}[Computational steps for $\mathcal{I}_r$]\label{Computational steps}
For a given codimenson $r$, we obtain $\mathcal{I}_r$ in the following $r$ steps.\\
\rm{\textbf{Step\,1:}} Set $\mathfrak{S}_1=\{\Gamma(\mathfrak{m}_{\mathcal{O}})\}$ and find generators of $\Gamma(\mathfrak{m}_{\mathcal{O}})$ by Proposition\,\ref{alg1}.  \\
\rm{\textbf{Step\,\mathversion{bold}$i$ $(i=2,\ldots,r)$:}} First compute $\mathfrak{S}_{i}$ by Proposition\,\ref{alg2}. Next determine $\mathfrak{A}_i$ by applying Proposition\,\ref{alg1} to each elements in $\mathfrak{S}_{i}$.\\
\textbf{Step\,\mathversion{bold}$r+1$:}  For each $S\in \mathfrak{S}_r$, determine in the coefficients in  the  generators of $I(S)$ by Proposition\,\ref{alg3}.
\end{thm}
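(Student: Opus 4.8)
The plan is to assemble the theorem directly from the three algorithmic propositions already proved, checking only that the inductive bookkeeping on the codimension is sound. The statement asserts that $\mathcal{I}_r$ is obtained in $r+1$ explicit steps, so the proof is essentially a verification that the outputs of the steps chain together correctly and that each step terminates; the latter is already guaranteed by Propositions~\ref{alg1}, \ref{alg2} and \ref{alg3}.

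First I would handle the base case. For $r=1$, an ideal $I$ with $\tau(I)=1$ satisfies $\dim_k \mathcal{O}/I=1$, hence $I=\mathfrak{m}_{\mathcal{O}}$ by (\ref{quotient}) with no $d_i$'s, so $\mathcal{I}_1=\mathcal{I}(\Gamma(\mathfrak{m}_{\mathcal{O}}))$ and $\mathfrak{S}_1=\{\Gamma(\mathfrak{m}_{\mathcal{O}})\}$; applying Proposition~\ref{alg1} produces its minimal generating set, which is exactly Step~1. Next, for the inductive part, I would argue that Lemma~\ref{prp r r+1} establishes a surjection (at the level of order sets) from $\mathfrak{S}_{r-1}$ onto $\mathfrak{S}_r$: every $S\in\mathfrak{S}_r$ is the order set of some $J\in\mathcal{I}_r$, and by the converse direction of Lemma~\ref{prp r r+1} (applied with $d_i=d_1$, say) there is $J'\in\mathcal{I}_{r-1}$ whose order set lies in $\mathfrak{S}_{r-1}$ and from which $S$ is recovered by the operation $[S_i\setminus\{\alpha_{i,j}\}]\cup\{\alpha_{i,j}+\gamma\mid\gamma\in\Gamma\setminus\{0\}\}$ for a suitable minimal generator $\alpha_{i,j}$. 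Hence Proposition~\ref{alg2}, which runs exactly over all such operations on all elements of $\mathfrak{S}_{r-1}$, returns precisely $\mathfrak{S}_r$. Then applying Proposition~\ref{alg1} to each element of $\mathfrak{S}_r$ yields $\mathfrak{A}_r$. Iterating from $i=2$ to $i=r$ gives Steps~$2,\ldots,r$, so after Step~$r$ we know $\mathfrak{S}_r$ and $\mathfrak{A}_r$ completely.

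Finally, for Step~$r+1$, I would invoke Proposition~\ref{decomposition of Ir} to write $\mathcal{I}_r=\bigcup_{S\in\mathfrak{S}_r}\mathcal{I}(S)$ as a disjoint union, so it suffices to describe each $\mathcal{I}(S)$. By Lemma~\ref{normal form}, every ideal in $\mathcal{I}(S)$ is generated by the finite normal-form set $F$ of (\ref{generators}), whose free coefficients range over $k$ subject to the condition set $H$; by Proposition~\ref{alg3}, $H$ is computable in finitely many steps from $F$. Thus each $\mathcal{I}(S)$, and hence $\mathcal{I}_r$, is explicitly determined, completing the argument.

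The main obstacle is not any single hard calculation but making the second paragraph's surjectivity claim airtight: one must check that the converse direction of Lemma~\ref{prp r r+1} genuinely produces an element of $\mathfrak{S}_{r-1}$ (i.e.\ that $G(J')=G(J)\setminus\{d_i\}$ really arises as the gap set of an ideal with the right codimension, which is exactly what that lemma provides) and that the removed element $d_i$ corresponds to a \emph{minimal} generator of the resulting $\Gamma$-module, so that Proposition~\ref{alg2}'s loop over minimal generators does not miss it. Once this correspondence between the ``add a gap'' operation on ideals and the ``shift a minimal generator'' operation on $\Gamma$-modules is spelled out, the rest is immediate from the cited results.
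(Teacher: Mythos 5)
Your proposal is correct and follows essentially the same route as the paper: base case $\mathcal{I}_1=\{\mathfrak{m}_{\mathcal{O}}\}$, inductive construction of $\mathfrak{S}_i$ and $\mathfrak{A}_i$ via Propositions~\ref{alg2} and \ref{alg1}, and finally Lemma~\ref{normal form} plus Proposition~\ref{alg3} on each $S\in\mathfrak{S}_r$ through the decomposition of Proposition~\ref{decomposition of Ir}. The only difference is that you spell out the completeness of Step~$i$ (that every order set in $\mathfrak{S}_r$ is reached, via the converse of Lemma~\ref{prp r r+1} and the minimality of the removed generator), a point the paper leaves implicit inside Proposition~\ref{alg2}; this is a welcome clarification rather than a deviation.
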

\begin{proof}
It is clear that $\mathcal{I}_1=\{\mathfrak{m}_{\mathcal{O}}\}$. 
So we have $\mathfrak{S}_1=\{\Gamma(\mathfrak{m}_{\mathcal{O}})\}$. 
By using Proposition\,\ref{alg1}, we can obtain the minimal generating set $A$ of $\Gamma(\mathfrak{m}_{\mathcal{O}})$.  
Set $\mathfrak{A}_1:=\{A\}$. 
With these datum, we compute $\mathfrak{S}_2$ by using Proposition\,\ref{alg2}. 
Applying Proposition\,\ref{alg1} to each element in $\mathfrak{S}_2$, we get $\mathfrak{A}_2$. 
Continuing this process successively, we obtain $\mathfrak{S}_r$ from $\mathfrak{S}_{r-1}$ for a codimention $r$. 
Finally, for each $S$ in $\mathfrak{S}_r$, consider the generators of $I(S)$ obtained by Lemma\,\ref{generators}. 
Their coefficients are determined by Proposition\,\ref{alg3}. 
\end{proof}

\section{Examples}\label{Examples}
We consider two examples in this section.   
If  $\mathfrak{S}_\tau$ consists of $l$ elements, then we write $S_{\tau,i}$ $(i=1,\ldots,l)$ for these elements. 
We also use the notation $\mathcal{I}_{\tau,i}$ instead of $\mathcal{I}(S_{\tau,i})$. 
We first consider the $A_{2d}$ type singularity  
(i.e. the curve singularity whose local ring  $\mathcal{O}$ is isomorphic to $k[[t^2,t^{2d+1}]]$). 
Similar to Lemma\,\ref{normal form},  we can prove the following lemma for the $A_{2d}$ type singularity:
\begin{lem}\label{normal form of ideal}
Let $I$ be an ideal of $k[[t^2,t^{2d+1}]]$.
If we have $\Gamma(I)=\langle \alpha_1,\alpha_2\rangle$ where $\alpha_1<\alpha_2$,
then $I$ is generated by  
\begin{equation}\label{coe}
f_{\alpha_1}=t^{\alpha_1}+\sum_{j\in G(I),j>\alpha_1}a_jt^j,\quad f_{\alpha_2}=t^{\alpha_2}.
\end{equation}
On the other hand, if $I$ has $\Gamma(I)=\langle \alpha_1\rangle$,
then $I$ is generated by $f_{\alpha_1}$ above.
\end{lem}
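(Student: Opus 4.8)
The plan is to first reduce the general normal form of Lemma \ref{normal form} to the two-generator (resp. one-generator) case by exploiting the very special structure of $\Gamma = \langle 2, 2d+1\rangle$, and then to show that the leading coefficient $t^{\alpha_2}$ may be taken pure, i.e. with no tail. First I would recall that for $\mathcal{O} = k[[t^2, t^{2d+1}]]$ the semigroup is $\Gamma = \{0,2,4,\ldots,2d\} \cup \{2d+1, 2d+2, \ldots\}$, so the gap set is $G = \{1,3,5,\ldots,2d-1\}$ and the conductor is $c = 2d$. In particular $\Gamma$ has exactly two minimal generators, $2$ and $2d+1$. By Proposition \ref{alg1} any $\Gamma$-module $S$ has at most... in fact, because every even integer $\ge 2$ and every integer $\ge 2d+1$ lies in $\Gamma$, a short argument shows a $\Gamma$-module needs at most two minimal generators (one "even" residue and one "odd" residue modulo the structure of $\Gamma$); this is exactly the hypothesis $\Gamma(I) = \langle \alpha_1, \alpha_2\rangle$ or $\langle \alpha_1 \rangle$ in the statement, so I may take it as given. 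So the starting point is: by Lemma \ref{normal form}, $I$ is generated by $\{f_\gamma : \gamma \in S,\ \min\{S\} \le \gamma \le \min\{S\} + c - 1\}$, and I must trim this down to $\{f_{\alpha_1}, f_{\alpha_2}\}$.

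The first key step is to see that among the generators $f_\gamma$ of Lemma \ref{normal form}, only $f_{\alpha_1}$ and $f_{\alpha_2}$ are needed. Here $\alpha_1 = \min\{S\}$. For any $\gamma \in S$ with $\gamma \ne \alpha_1, \alpha_2$, one has either $\gamma - \alpha_1 \in \Gamma$ or $\gamma - \alpha_2 \in \Gamma$: indeed $\gamma \in S = \langle \alpha_1, \alpha_2 \rangle$ means $\gamma = \alpha_i + \delta$ for some $i$ and some $\delta \in \Gamma$, $\delta \neq 0$ (when $\gamma \notin \{\alpha_1,\alpha_2\}$). Then $t^\delta \in \mathcal{O}$, and $f_\gamma - t^\delta f_{\alpha_i}$ has order $> \gamma$ and all its exponents lie in $G(I)$; iterating (the exponents strictly increase and are bounded by $c(I)$, beyond which $f_\gamma = t^\gamma$ reduces to zero against $f_{\alpha_i}$, exactly as in the proof of Lemma \ref{normal form}) shows $f_\gamma$ lies in the ideal generated by $f_{\alpha_1}, f_{\alpha_2}$. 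So $I = (f_{\alpha_1}, f_{\alpha_2})$ with $f_{\alpha_i} = t^{\alpha_i} + \sum_{j \in G(I), j > \alpha_i} a_{i,j} t^j$.

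The second key step — and the part I expect to be the main obstacle — is to remove the tail of $f_{\alpha_2}$, i.e. to show we may take $f_{\alpha_2} = t^{\alpha_2}$ outright. The idea: the gaps of $\Gamma$ are $1, 3, \ldots, 2d-1$, all odd and all $< 2d \le c$. Since $\alpha_2 = \min\{S \setminus (\alpha_1 + \Gamma)\}$ and $\alpha_1 + \Gamma$ already contains $\alpha_1 + 2\mathbb{N}_{\ge 1}$ together with everything $\ge \alpha_1 + 2d+1$, the "new" gaps contributed in the tail of $f_{\alpha_2}$ — the $j \in G(I)$ with $j > \alpha_2$ — are constrained to a bounded window and all of opposite parity to $\alpha_1 - \alpha_2$; one checks that $\alpha_2 - \alpha_1$ is odd (otherwise $\alpha_2 \in \alpha_1 + \Gamma$, since all positive even numbers are in $\Gamma$), hence $\alpha_2 \equiv 2d+1$ in the relevant sense and $\alpha_2 + (\text{any element of }\Gamma\setminus\{0\})$ already meets $\alpha_1 + \Gamma$. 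Consequently for each $j \in G(I)$ with $j > \alpha_2$ one has $j - \alpha_1 \in \Gamma$, so $t^{j} \in \mathcal{O}$ can be written as $t^{j-\alpha_1}\cdot t^{\alpha_1}$ and the monomial $a_{2,j} t^j$ in the tail of $f_{\alpha_2}$ can be cancelled by subtracting $a_{2,j}\, g_{j - \alpha_1}\, f_{\alpha_1}$ (in the notation of Section \ref{Determination of ideals}), without disturbing the already-cleared lower-order tail and without changing $\Gamma(I)$, since the correction terms all have order in $G(I)$ still, or eventually in $\Gamma(I)$ where they vanish against $f_{\alpha_1}$. Running this from the lowest surviving exponent upward (finitely many steps, as all exponents involved are $< c(I)$) clears the entire tail of $f_{\alpha_2}$, leaving $f_{\alpha_2} = t^{\alpha_2}$. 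The delicate bookkeeping — making sure each cancellation step does not reintroduce lower-order tail terms into $f_{\alpha_2}$ and that the rewritten generators still generate the same $I$ with the same order set — is where the real work lies; it is the $A_{2d}$-specific analogue of the reduction run in Proposition \ref{alg3}, and uses crucially that $\mathrm{LC}(f_{\alpha_1}) = 1$ and that $\Gamma$ has the minimal possible complexity. The one-generator case $\Gamma(I) = \langle \alpha_1 \rangle$ is the degenerate instance of the same argument: here $S = \alpha_1 + \Gamma$ forces $G(I) = \{j : j \notin \alpha_1 + \Gamma,\ j \in \Gamma\}$ to be empty above $\alpha_1$ after the cleanup (or rather, there is no second generator to trim and $f_{\alpha_1}$ alone generates $I$ by the first key step), so nothing further is needed.
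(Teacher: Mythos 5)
Your first step is fine and is essentially the route the paper intends (the paper gives no separate proof, saying only that the lemma is proved ``similar to Lemma\,\ref{normal form}''): since $S=\langle\alpha_1,\alpha_2\rangle$, every $\gamma\in S\setminus\{\alpha_1,\alpha_2\}$ equals $\alpha_i+\delta$ with $\delta\in\Gamma\setminus\{0\}$, the monomial $t^{\delta}$ lies in $k[[t^2,t^{2d+1}]]$, and the usual reduction (orders strictly increase; past the conductor one concludes by completeness, as in the proof of Lemma\,\ref{normal form}) puts every $f_\gamma$ into $(f_{\alpha_1},f_{\alpha_2})$. One small slip there: the exponents of $f_\gamma-t^{\delta}f_{\alpha_i}$ need not all lie in $G(I)$; only its order matters, so the iteration still goes through.

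The genuine problem is your second step. You assert that for $j\in G(I)$ with $j>\alpha_2$ one has $j-\alpha_1\in\Gamma$, and you build a tail-clearing procedure on that premise; but $j\in G(I)$ means $j\notin\Gamma(I)\supseteq\alpha_1+\Gamma$, so $j-\alpha_1\in\Gamma$ is impossible --- the premise contradicts the definition of $G(I)$, and if such tail terms actually existed they could not be cancelled by a multiple of $f_{\alpha_1}$ of order $j$. So the cancellation machinery, whose ``delicate bookkeeping'' you defer as the real work, is both unworkable as described and unnecessary. The parity observation you already made finishes the lemma, just with a different conclusion: if $j\in G(I)$ and $j>\alpha_2$, then $j-\alpha_1\notin\Gamma$ and $j-\alpha_2\notin\Gamma$ with both differences positive, hence both are odd gaps of $\Gamma$; but they differ by $\alpha_2-\alpha_1$, which is odd because $\alpha_2\notin\alpha_1+\Gamma$, a contradiction. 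Therefore $G(I)$ contains no element exceeding $\alpha_2$ (equivalently $c(I)\le\alpha_2$), so the sum defining $f_{\alpha_2}$ is empty and $f_{\alpha_2}=t^{\alpha_2}$ by the very definition of the normal form --- no rewriting of generators is needed. With that correction, and the one-generator case handled as you say by reducing against $f_{\alpha_1}$ alone, the proof is complete.
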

\begin{rem}\label{remark}
Lemma\,\ref{normal form of ideal} implies that, for the $A_{2d}$ type singularity, the coefficients in $(\ref{coe})$ are completely determined by $\Gamma(I)$. 
Namely, $H=\emptyset$. 
So we can omit the use of Proposition\,\ref{alg3}  in Step\, $r+1$ in Theorem\,$\ref{Computational steps}$. 
\end{rem}
\noindent
 
Now consider the $A_{6}$ singularity.  
Its local ring $\mathcal{O}$ is isomorphic to $k[[t^2,t^7]]$. 
We classify the elements of $\mathcal{I}_\tau$ for $1\le \tau \le 6$. 
We observe that $\Gamma=\{0,2,4,6,7,8,\ldots\}$ where $c=6$.
Carrying  out Theorem\,\ref{Computational steps}, the order sets  $S_{r,i}$  and the components of $\mathcal{I}_\tau$ are determined as in the following two tables:
\begin{center}
\begin{tabular}{c|l}
$\tau$&order sets\\
1&$S_{1,1}=\langle 2,7\rangle$\\
2&$S_{2,1}=\langle 4,7\rangle$, $S_{2,2}=\langle 2\rangle$\\
3&$S_{3,1}=\langle 6,7\rangle$, $S_{3,2}=\langle 4,9\rangle$\\
4&$S_{4,1}=\langle 7,8\rangle$, $S_{4,2}=\langle 6,9\rangle$, $S_{4,3}=\langle 4\rangle$\\
5&$S_{5,1}=\langle 8,9\rangle$, $S_{5,2}=\langle 7,10\rangle$, $S_{5,3}=\langle 6,11\rangle$\\
6&$S_{6,1}=\langle 9,10\rangle$, $S_{6,2}=\langle 8,11\rangle$, $S_{6,3}=\langle 7,12\rangle$, $S_{6,4}=\langle 6\rangle$\\
\end{tabular} 
\end{center}
\begin{center}
\begin{tabular}{c|l}
$\tau$&components of $\mathcal{I}_{\tau}$\\
1&$\mathcal{I}_{1,1}=(t^2,t^7)$\\
2&$\mathcal{I}_{2,1}=(t^4,t^7)$, $\mathcal{I}_{2,2}=( t^2+at^7)$\\
3&$\mathcal{I}_{3,1}=(t^6,t^7)$, $\mathcal{I}_{3,2}=(t^4+at^7,t^9)$\\
4&$\mathcal{I}_{4,1}=(t^7,t^8)$, $\mathcal{I}_{4,2}=(t^6+at^7,t^9)$, $\mathcal{I}_{4,3}=(t^4+at^7+bt^9)$\\
5&$\mathcal{I}_{5,1}=(t^8,t^9)$, $\mathcal{I}_{5,2}=(t^7+at^8,t^{10})$, $\mathcal{I}_{5,3}=(t^6+at^7+bt^9,t^{11})$\\
6&$\mathcal{I}_{6,1}=(t^9,t^{10})$, $\mathcal{I}_{6,2}=(t^8+at^9,t^{11})$,\\
& $\mathcal{I}_{6,3}=(t^7+at^8+bt^{10},t^{12})$, $\mathcal{I}_{6,4}=(t^6+at^7+bt^9+ct^{11})$\\
\end{tabular} 
\end{center}
In the above table, $a,b$ and $c$ are elements of $k$. 

Next we consider the $E_6$ singularity as second example.
It is the irreducible curve singularity whose local ring $\mathcal{O}$ is $k[[t^3,t^4]]$. 
We have $\Gamma=\{3,4,6,7,8,\ldots\}$ where $c=6$. 
We determine the elements of $\mathcal{I}_\tau$ for $1\le \tau \le 6$.
Applying Theorem\,\ref{Computational steps}, we obtain the following two tables:
\begin{center}
\begin{tabular}{c|l}
$\tau$&order sets\\
1&$S_{1,1}=\langle 3,4\rangle$\\
2&$S_{2,1}=\langle 4,6\rangle$, $S_{2,2}=\langle 3,8\rangle$\\
3&$S_{3,1}=\langle 6,7,8\rangle$, $S_{3,2}=\langle 4,9\rangle$, $S_{3,3}=\langle 3\rangle$\\
4&$S_{4,1}=\langle 7,8,9\rangle$, $S_{4,2}=\langle 6,8\rangle$, $S_{4,3}=\langle 6,7\rangle$, $S_{4,4}=\langle 4\rangle$\\
5&$S_{5,1}=\langle 8,9,10\rangle$, $S_{5,2}=\langle 7,9\rangle$, $S_{5,3}=\langle 7,8\rangle$, $S_{5,4}=\langle 6,11\rangle$\\
6&$S_{6,1}=\langle 9,10,11\rangle$, $S_{6,2}=\langle 8,10\rangle$, $S_{6,3}=\langle 8,9\rangle$, $S_{6,4}=\langle 7,12\rangle$, $S_{6,5}=\langle 6\rangle$\\
\end{tabular} 
\end{center}
\begin{center}
\begin{tabular}{c|l}
$\tau$&components of $\mathcal{I}_\tau$\\
1&$\mathcal{I}_{1,1}=(t^3,t^4)$\\
2&$\mathcal{I}_{2,1}=(t^3,t^6)$, $\mathcal{I}_{2,2}=(t^3+at^4,t^8)$\\
3&$\mathcal{I}_{3,1}=(t^6,t^7,t^8)$, $\mathcal{I}_{3,2}=(t^4+at^6,t^9)$, 
$\mathcal{I}_{3,3}=(t^3+at^4+bt^8,t^6-a^2t^8)$\\
4&$\mathcal{I}_{4,1}=(t^7,t^8,t^9)$, $\mathcal{I}_{4,2}=(t^6+at^7,t^8)$,\\ 
&$\mathcal{I}_{4,3}=(t^6+at^8,t^7+bt^8)$, $\mathcal{I}_{4,4}=(t^4+at^6+bt^9)$\\
5&$\mathcal{I}_{5,1}=(t^8,t^9,t^{10})$, $\mathcal{I}_{5,2}=(t^7+at^8,t^9)$,\\
&$\mathcal{I}_{5,3}=(t^7+at^9,t^8+bt^9)$, $\mathcal{I}_{5,4}=(t^6+at^7+bt^8,t^{11})$\\
6&$\mathcal{I}_{6,1}=(t^9,t^{10},t^{11})$, $\mathcal{I}_{6,2}=(t^8+at^9,t^{10})$, $\mathcal{I}_{6,3}=(t^8+at^{10},t^9+bt^{10})$\\
&$\mathcal{I}_{6,4}=(t^7+at^8+bt^9,t^{12})$, $\mathcal{I}_{6,5}=(t^6+at^7+bt^8+ct^{11},t^{9}+(b-a^2)t^{11})$\\
\end{tabular} 
\end{center}
In the above table, $a,b,c,\in k$.

Here we explain how to determine $\mathcal{I}_{6,5}$ by Step\,7 in Theorem\,\ref{Computational steps}. 
By Proposition\,\ref{alg1}, we see that $S_{6,5}$ generated by 6. 
Let $I$ be an ideal in $ \mathcal{I}_{6,5}$. 
We have  $\Gamma(I)=S_{6,5}=\{6,9,10,12,13,\ldots\}$ where $c(I)=12$ and $G(I)=\{0,1,2,3,4,5,6,7,8,11\}$. 
By Lemma\,\ref{normal form}, 
we can take $F=\{f_\gamma\in I|\, \gamma\in \{6,9,10\}\cup \{\gamma|\,12\le \gamma\le 17\}\}$ as the set (\ref{generators}) of generators of $I$. 
We simply write $f_6=t^6+at^7+bt^8+ct^{11}$, $f_9=t^9+dt^{11}$, $f_{10}=t^{10}+et^{11}$ and $f_{\gamma}=t^\gamma$ for $12\le \gamma\le 17$. 
We first apply Proposition\,\ref{alg3} to $f_6$ and $f_9$. 
Then we have
$$
R_1:=\mathrm{Red}(f_6,f_9)=t^3f_6-f_9=at^{10}+(b-d)t^{11}. 
$$
Note that $10\in\Gamma(I)$. 
Since $10=4+6$ $(4\in\Gamma, 6\in \Gamma(I))$ is the unique expression in $S_{6,5}$, we set $L_1=\{f_6\}$. 
Taking $t^4$ as $g_4$, we  reduce $R_1$ by $pt^4f_6$ $(p\in k)$.  
$$
R_2:=\mathrm{Red}(R_1,pt^4f_6)=\left(\frac{b-d-a^2}{a}\right)t^{11}-bt^{12}-ct^{15}
$$
Since $11\notin \Gamma(I)$, we must have $b-d-a^2=0$. 
Hence, we add  $d=b-a^2$ to $H$.  
Since $\mathrm{ord}(R_2)$ is 12,  further reduction step yields no relations. 
Next we consider the reduction of $f_6$ and $f_{10}$.  
In the same argument as above, we can see that $e=a$. 
Furthermore, it is easy to check that $f_{10}$ is expressed by $f_6$.  
We  also can show that all $f_\gamma$ with $12\le \gamma \le 17$ are expressed by $f_6$. 
Finally, we conclude that $\mathcal{I}_{6,5}=(t^6+at^7+bt^8+ct^{11},t^{9}+(b-a^2)t^{11})$.

\noindent
\small 
Masahiro Watari\\
Division of General Subject\\
Tsuyama National College of Technology\\
624-1 Numa\\
Tsuyamashi Okayama 708-8509  Japan.\\
E-mail: watari@tusyma-ct.ac.jp

\end{document}